\theoremstyle{plain}
\newtheorem{Thm}{Theorem}[section]
\newtheorem{Lem}[Thm]{Lemma}
\newtheorem{Cor}[Thm]{Corollary}
\newcommand{\bZ}{\ensuremath{\mathbb{Z}}}
\newcommand{\cF}{\ensuremath{\mathcal{F}}}
\journal{arXiv}
\begin{document}
\begin{frontmatter}

\title{The competition numbers of ternary Hamming graphs}

\author[label1]{{\sc Boram PARK} \corref{cor1}\fnref{label3}}
\author[label2]{{\sc Yoshio SANO} \fnref{label4} }

\address[label1]{Department of Mathematics Education,
Seoul National University, Seoul 151-742, Korea}
\address[label2]{Pohang Mathematics Institute, POSTECH, Pohang 790-784, Korea}
\fntext[label3]{
{\it E-mail addresses:}
{\tt kawa22@snu.ac.kr}; {\tt borampark22@gmail.com} \\
This research was supported by Basic Science Research Program
through the National Research Foundation of Korea (NRF) funded by
the Ministry of Education, Science and Technology (700-20100058).}
\fntext[label4]{
{\it E-mail addresses:}
{\tt ysano@postech.ac.kr}; {\tt y.sano.math@gmail.com} \\
This work was supported by Priority Research Centers Program
through the National Research Foundation of Korea (NRF) funded by
the Ministry of Education, Science and Technology (MEST) (No. 2010-0029638).}
\cortext[cor1]{Corresponding author}

\begin{abstract}
It is known to be a hard problem to compute the competition number $k(G)$
of a graph $G$ in general.
Park and Sano \cite{PS1} gave the exact values
of the competition numbers of Hamming graphs $H(n,q)$
if $1 \leq n \leq 3$ or $1 \leq q \leq 2$.
In this paper, we give an explicit formula of the competition numbers
of ternary Hamming graphs.
\end{abstract}

\begin{keyword}
competition graph; competition number;
edge clique cover; Hamming graph

\MSC[2010]  05C69, 05C20

\end{keyword}

\end{frontmatter}

\section{Introduction}

The notion of a competition graph was introduced by Cohen \cite{Cohen1}
as a means of determining the smallest dimension of ecological phase space.
The {\it competition graph} $C(D)$ of a digraph $D$ is
a (simple undirected) graph
which has the same vertex set
as $D$ and an edge between vertices $u$ and $v$ if and only if
there is a vertex $x$ in $D$ such that $(u,x)$ and $(v,x)$ are arcs of $D$.
For any graph $G$, $G$ together
with sufficiently many isolated vertices is the competition graph of an
acyclic digraph.
Roberts \cite{MR0504054} defined the {\em competition number} $k(G)$ of
a graph $G$ to be the smallest number $k$ such that $G$ together with
$k$ isolated vertices is the competition graph of an acyclic
digraph.
Opsut \cite{MR679638} showed that the computation of the
competition number of a graph is an NP-hard problem
(see \cite{kimsu} for graphs whose competition numbers are known).
It has been one of important research problems in the study of
competition graphs to compute the competition numbers for
various graph classes
(see \cite{KLS}, \cite{KPS-Johnson}, \cite{KimSano},
\cite{LC}, \cite{LiChang}, \cite{PLK},
\cite{PKS1}, \cite{PKS2}, \cite{Sano}, \cite{ZhaoChang2009},
\cite{ZhaoChang2010}, for recent research).
For some special graph families, we have explicit formulas
for computing competition numbers.
For example, if $G$ is a chordal graph without isolated vertices
then $k(G)=1$,
and if $G$ is a nontrivial triangle-free connected graph
then $k(G)=|E(G)|-|V(G)|+2$ (see \cite{MR0504054}).

In this paper, we study the competition numbers of Hamming graphs.
For a positive integer $q$,
we denote a $q$-set $\{1,2 \ldots, q \}$ by $[q]$.
Also we denote the set of $n$-tuple over $[q]$ by $[q]^n$.
For positive integers $n$ and $q$,
the {\it Hamming graph} $H(n,q)$
is the graph which has the vertex set ${[q]^n}$ and
in which
two vertices $x$ and $y$ are adjacent if $d_H(x,y)=1$,
where $d_H:[q]^n \times [q]^n \to \bZ$ is
the {\it Hamming distance} defined by
$d_H(x,y):=|\{i \in [n] \mid x_i \neq y_i \}|$.
Note that the diameter of the Hamming graph
$H(n,q)$ is equal to $n$ if $q \geq 2$
and that the number of edges of
the Hamming graph $H(n,q)$ is equal to $\frac{1}{2} n(q-1)q^n$.
Park and Sano \cite{PS1} gave the exact values
of the competition numbers of Hamming graphs $H(n,q)$
if $1 \leq n \leq 3$ or $1 \leq q \leq 2$ as follows.
For $n \geq 1$, $H(n,1)$ has no edge and so $k(H(n,1))=0$.
For $n \geq 1$, $H(n,2)$ is a $n$-cube and $k(H(n,2))=(n-2)2^{n-1} +2$.
For $q \geq 2$, $H(1,q)$ is complete graph and so $k(H(1,q))=1$.
\begin{Thm}[\cite{PS1}] \label{thm:H(2,q)}
For $q \geq 2$, we have $k(H(2,q)) =2$.
\end{Thm}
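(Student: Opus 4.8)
The plan is to prove the two inequalities $k(H(2,q))\ge 2$ and $k(H(2,q))\le 2$ separately. Throughout I identify the vertex set $[q]^2$ of $H(2,q)$ with the cells of a $q\times q$ grid, so that two cells are adjacent exactly when they lie in a common row or a common column; thus $H(2,q)$ is the $q\times q$ rook's graph. Its maximal cliques are precisely the $q$ rows $R_i=\{(i,j):j\in[q]\}$ and the $q$ columns $C_j=\{(i,j):i\in[q]\}$, and since every edge of $H(2,q)$ lies in exactly one of these, the family $\{R_1,\dots,R_q,C_1,\dots,C_q\}$ is an edge clique cover of size $2q$.

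For the lower bound I would invoke Opsut's neighbourhood bound \cite{MR679638}, namely $k(G)\ge \min_{v\in V(G)}\theta(N_G(v))$, where $\theta(N_G(v))$ denotes the least number of cliques needed to cover the vertices of the neighbourhood $N_G(v)$. (A self-contained argument is also available: take an acyclic digraph $D$ with $C(D)=H(2,q)\cup I_k$, order $V(D)$ so that every arc points to an earlier vertex, let $v$ be the first vertex of $V(H(2,q))$ in this order, and note that the at most $k$ preceding vertices are all added isolated ones, while their in-neighbourhoods are cliques whose traces on $N_{H(2,q)}(v)$ cover $N_{H(2,q)}(v)$, giving $k\ge\theta(N_{H(2,q)}(v))$.) For any cell $v=(i,j)$ the neighbourhood is $(R_i\setminus\{v\})\cup(C_j\setminus\{v\})$; the two pieces are cliques, but a row-neighbour and a column-neighbour of $v$ differ in both coordinates and hence are non-adjacent, so $N_{H(2,q)}(v)$ is not itself a clique. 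Therefore $\theta(N_{H(2,q)}(v))=2$ for every $v$, and the bound yields $k(H(2,q))\ge 2$.

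For the upper bound I would exhibit an acyclic digraph $D$ on the vertex set $[q]^2\cup\{a_1,a_2\}$ whose competition graph is $H(2,q)$ together with the two isolated vertices $a_1,a_2$. Order the cells row by row, $(1,1),\dots,(1,q),(2,1),\dots,(q,q)$, and place $a_1,a_2$ at the very end. To each clique of the cover I assign a distinct prey and draw an arc from every member of the clique to that prey: send $R_i$ to $(i+1,1)$ and $C_j$ to $(q,j+1)$ for $1\le i,j\le q-1$, and send the remaining two cliques $R_q,C_q$ (the row and the column of the last cell $(q,q)$) to $a_1$ and $a_2$ respectively. One then checks that these $2q$ preys are pairwise distinct, that each prey follows all members of its clique in the chosen order — so every arc points forward and $D$ is acyclic — and that the in-neighbourhood of each vertex equals a single clique of the cover. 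Since the cover consists of cliques of $H(2,q)$ and meets every edge, the competition graph on $[q]^2$ is exactly $H(2,q)$, while $a_1,a_2$ receive arcs but emit none and are thus isolated. This gives $k(H(2,q))\le 2$, and combined with the lower bound proves $k(H(2,q))=2$.

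The verifications of distinctness, acyclicity, and the ``no extra edges'' property are routine and immediate from the explicit assignment. The one genuinely structural point — and the reason the value is exactly $2$ rather than $1$ — is that the last cell $(q,q)$ in any vertex ordering lies in both a row clique and a column clique, each of which needs a prey strictly later than $(q,q)$; no vertex of $H(2,q)$ can serve, so at least two added vertices are forced. Opsut's bound makes this intuition quantitative on the lower-bound side, and the main design task on the upper-bound side is precisely to engineer an ordering in which only these two unavoidable cliques spill over onto the added vertices.
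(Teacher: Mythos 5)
Theorem~\ref{thm:H(2,q)} is stated in this paper as a quoted result from \cite{PS1}; the paper contains no proof of it, so there is nothing internal to compare against. Judged on its own, your proof is correct and complete. The lower bound is Opsut's bound $k(G)\ge\min_{v}\theta(N_G(v))$ (your self-contained derivation of it is also valid: the vertices preceding the first $H(2,q)$-vertex in a reverse acyclic ordering are among the $k$ added vertices, and their in-neighbourhoods trace cliques covering that vertex's neighbourhood), and for the rook's graph every neighbourhood is the disjoint union of a row clique and a column clique with no edges between them, so $\theta(N_G(v))=2$ for every $v$. Your upper-bound digraph is also sound: the $2q$ preys $(i+1,1)$, $(q,j+1)$, $a_1$, $a_2$ are pairwise distinct, each prey comes after every member of its assigned clique in the row-by-row order (so the digraph is acyclic and no prey lies in its own clique), and each in-neighbourhood is exactly one row or column clique, so the competition graph is exactly $H(2,q)\cup I_2$. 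This two-step scheme --- Opsut's neighbourhood bound for the lower bound, a clique-to-prey assignment along a vertex ordering for the upper bound --- is the standard route and is essentially the argument of \cite{PS1}; it is also consistent with the structural facts this paper records in Lemma~\ref{lem:LB2} (for $n=2$ the rows and columns are precisely the maximal cliques, and every edge lies in exactly one of them).
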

\begin{Thm}[\cite{PS1}]\label{thm:H(3,q)}
For $q \geq 3$, we have $k(H(3,q)) =6$.
\end{Thm}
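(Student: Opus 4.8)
The plan is to prove both inequalities $k(H(3,q)) \le 6$ and $k(H(3,q)) \ge 6$ through the reduction of the competition number to edge clique covers together with an acyclic ordering of the vertices. First I would record the clique structure of $H(3,q)$: for $q \ge 3$ every triangle lies on a \emph{line} $\{x : x_j = c_j \ (j \neq i)\}$ obtained by fixing two coordinates, each such line is a maximal clique isomorphic to $K_q$, there are exactly $3q^2$ of them, and every edge lies on a \emph{unique} line (the one in the direction where its endpoints differ). Consequently every clique of $H(3,q)$ is contained in a single line, and in any edge clique cover $\mathcal{F}$ each line must contribute members covering all of its vertices.

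Next I would use a scheduling reformulation of $k$. If $D$ is acyclic with $C(D)=H(3,q)\cup I_k$, order its vertices topologically; then the in-neighborhoods $N^-_D(v)$ form an edge clique cover, each hosted by the prey $v$ that follows all of its members. Conversely, given a cover $\mathcal{F}$ and a linear order of $V(H(3,q))$, a clique can be hosted by a real vertex only if that vertex follows all clique members, and a Hall-type deficiency count shows that the least number of extra (isolated) prey needed equals $\max_{1\le j\le n}\big(\ell_{\mathcal{F}}(W_j)-j+1\big)$, where $W_j$ is the set of the $j$ largest vertices and $\ell_{\mathcal{F}}(W)$ is the number of members of $\mathcal{F}$ meeting $W$; one then minimizes over $\mathcal{F}$ and the order. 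Writing $\pi_i(W)$ for the projection of $W\subseteq[q]^3$ forgetting the $i$-th coordinate, the line cover gives $\ell(W)=|\pi_1(W)|+|\pi_2(W)|+|\pi_3(W)|$, and the refinement remark above yields $\ell_{\mathcal{F}}(W)\ge \ell(W)$ for every cover. Thus it suffices to analyse, for the nested family $W_1\subset W_2\subset\cdots$ obtained by adding one vertex at a time, the quantity $f(W):=\sum_{i=1}^{3}|\pi_i(W)|-|W|$, and to show $1+\min_{\text{order}}\max_j f(W_j)=6$.

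The lower bound $\max_j f(W_j)\ge 5$ is the heart of the argument. A continuous estimate (Loomis--Whitney) only gives $\ge 4$, attained by a $2\times2\times2$ box, so the real point is an integrality gap forced by the step-by-step growth. I would look at the first moment at which the current set $W$ attains two distinct values in all three coordinates simultaneously, and split into cases according to how many coordinates reach their second value at that very step: if one coordinate lags, the previous set lies in a coordinate plane and already has $f\ge 4$, and the new point contributes at most one ``hit'', pushing $f$ to $5$; if two or three coordinates reach their second value together, a direct computation at that step (or the immediately following one) gives $f\ge 5$ as well. Verifying that each case genuinely produces a prefix with $f\ge 5$, and that no ordering can dodge all of them, is where the main effort lies. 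Combined with $\ell_{\mathcal{F}}\ge\ell$, this yields $k(H(3,q))\ge 6$ for all $q\ge 2$.

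For the upper bound I would exhibit an explicit ordering with $\max_j f(W_j)\le 5$ and read off the digraph. A natural candidate grows the cube as a stack of $2\times2\times q$ ``tubes'': filling $\{0,1\}^2\times[q]$ first keeps $f$ between $4$ and $5$, and each subsequent tube reuses two of the three projections so that $f$ never exceeds $5$ (the values of $f$ on full sub-boxes, e.g.\ $f=4$ on a $2\times2\times t$ box, guide the bookkeeping, while one must avoid ever completing a large coordinate plane, on which $f$ would be about $2q$). Given such an order, I assign to each of the $3q^2$ lines a distinct later prey vertex, using $6$ additional isolated vertices for the lines that cannot be hosted by a real vertex, and set $N^-_D(\text{prey})$ equal to the assigned line; acyclicity and the absence of spurious edges are immediate because each in-neighborhood is a single clique. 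This gives $k(H(3,q))\le 6$, matching the lower bound. As a sanity check the same reduction returns $k(H(2,q))=2$ of Theorem~\ref{thm:H(2,q)} (there the analogous peak is $1$), which makes me fairly confident the constant is exactly $6$.
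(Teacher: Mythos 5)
First, a structural remark: this paper never proves Theorem~\ref{thm:H(3,q)} at all --- it is quoted from \cite{PS1} and is used here only as the base case $k(H(3,3))=6$ for the induction in Theorem~\ref{thm:UH_nq}. So your attempt can only be compared with the machinery this paper builds for its own main theorem. Judged on its own terms, your argument is correct and essentially complete. The Hall-type identity $k(G)=\min_{\mathcal{F},\,\mathrm{order}}\max_{j}\bigl(\ell_{\mathcal{F}}(W_j)-j+1\bigr)$ is sound: the hosts available to a clique form nested suffix-plus-isolated-vertex sets, so Hall's condition for the clique-to-host matching collapses exactly to your deficiency count; this plays the role that Lemma~\ref{lem:ExistD_nq} and Corollary~\ref{cor:Dabc} play in the paper. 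Your inequality $\ell_{\mathcal{F}}(W)\ge\ell(W)=\sum_i|\pi_i(W)|$ is justified by the unique-maximal-clique property (first item of Lemma~\ref{lem:LB2}). And the crux you flagged as ``the main effort'' --- that every ordering has a suffix set with $f(W)=\sum_i|\pi_i(W)|-|W|\ge 5$ --- does close along exactly your case split. Writing $d_i(W)$ for the number of values in coordinate $i$, note that adding $x$ changes $f$ by $2$ minus the number of directions in which $x$ has a neighbor in $W$. At the first step where all three $d_i\ge 2$: (i) if one coordinate lagged, the previous $W$ is planar, so $f(W)=d_a(W)+d_b(W)\ge 4$, and the entering point has two automatically new projections, giving $f\ge 5$; (ii) if two lagged, $W$ is $m\ge2$ collinear points with $f(W)=m+1$, and the entering point has three new projections, giving $f\ge m+3\ge5$; (iii) if all three lagged, $W$ was a single point, the new set consists of two points differing in every coordinate ($f=4$), and any third point can be adjacent to at most one of them, in only one direction, so the very next step forces $f\ge5$. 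Your tube ordering indeed keeps $f\le 5$ (it peaks at $5$ near the $2\times2\times2$ stage and decays as tubes and slabs complete), so both bounds hold, for every $q\ge 2$.

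As for the comparison: the paper's lower-bound technique (Theorem~\ref{thm:LH_n3}) is the dual bookkeeping to yours --- prefixes of an acyclic ordering must \emph{contain} many whole cliques, rather than suffixes \emph{meeting} many cliques needing hosts --- and it then invokes the extremal Lemma~\ref{lem2} ($10$ vertices of $H(n,3)$ carry at most $6$ triangles, with equality only for $H_1\cup I_1$ or $H_2$), an argument intrinsically tied to $q=3$, where maximal cliques are triangles. Your projection/step analysis replaces that extremal subgraph lemma and works uniformly in $q$, yielding a self-contained proof of Theorem~\ref{thm:H(3,q)} that simultaneously recovers $k(H(3,2))=6$ and the peak-$1$ computation behind Theorem~\ref{thm:H(2,q)}. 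What the paper's style buys instead is the explicit structural information (the graphs $H_1,H_2,H_3$) that its induction for Theorem~\ref{Mainthm:H(n,3)} actually needs; what yours buys is generality and independence from \cite{PS1}.
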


In this paper, we give the exact values of
the competition numbers of ternary Hamming graphs $H(n,3)$.
Our main result is the following:

\begin{Thm}\label{Mainthm:H(n,3)}
For $n \geq 3$, we have
\[
k(H(n,3)) =(n-3)3^{n-1}+6.
\]
\end{Thm}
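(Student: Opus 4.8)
The plan is to compute $k(H(n,3))$ by the standard two-sided strategy: a lower bound extracted from the edge clique cover structure and a matching upper bound from an explicit acyclic digraph, organized by induction on $n$ with base case $H(3,3)$ supplied by Theorem~\ref{thm:H(3,q)}. First I would record the combinatorial skeleton. Two vertices are adjacent exactly when they differ in one coordinate, so every maximal clique is a triangle obtained by fixing $n-1$ coordinates and letting one coordinate range over $[3]$; there are $n\,3^{n-1}$ such triangles, they are pairwise edge-disjoint, and together they partition $E(H(n,3))$. Hence the edge clique cover number is $\theta_E(H(n,3))=n\,3^{n-1}$, and Opsut's bound $k(G)\ge \theta_E(G)-|V(G)|+2$ already gives $k(H(n,3))\ge (n-3)3^{n-1}+2$. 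The entire difficulty is to improve the additive constant from $2$ to $6$ on the lower-bound side and to realize it on the upper-bound side. I would also exploit the factorization $H(n,3)\cong H(n-1,3)\,\square\,K_3$: three disjoint copies of $H(n-1,3)$ together with the $3^{n-1}$ new triangles running in the last coordinate.

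For the lower bound I would fix an acyclic digraph $D$ with $C(D)=H(n,3)$ plus $k$ isolated vertices and a topological order $v_1,\dots,v_N$, where $N=3^n+k$. Each in-neighborhood $N_D^-(v_i)$ is a clique of $H(n,3)$, hence has at most three vertices, and the non-trivial in-neighborhoods form an edge clique cover; since a triangle is covered most economically by itself (covers that use $2$-element cliques only increase the counts, so it suffices to treat the triangle case), one reduces to assigning each of the $n\,3^{n-1}$ triangles a distinct \emph{prey} vertex $v_i$ with $N_D^-(v_i)$ equal to that triangle. A vertex is a prey for at most one triangle, and its three triangle-vertices must precede it. The key reduction is thus to bound the number of original vertices that can serve as preys: writing this as $3^n-r$ gives $k\ge n\,3^{n-1}-(3^n-r)=(n-3)3^{n-1}+r$, so it suffices to prove $r\ge 6$. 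I would establish $r\ge 6$ by tracking, along the topological order, when triangles become available for a prey: the first three vertices cannot be preys (no triangle precedes them), and the matching of triangles to preys stalls just often enough near the start to force six unusable vertices. Controlling this count using only the metric structure of $H(n,3)$ --- in particular that two vertices of one triangle together with any outside vertex never form a new triangle --- is the main obstacle, since it is precisely what pushes the constant past Opsut's $2$.

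For the upper bound I would construct, by induction on $n$, an acyclic digraph attaining equality, with base case $n=3$ given by Theorem~\ref{thm:H(3,q)} at $q=3$. In the inductive step I would take three vertex-disjoint copies of an optimal digraph for $H(n-1,3)$, add arcs realizing the $3^{n-1}$ new last-coordinate triangles, and reuse as many original vertices as possible as preys --- including vertices that were isolated in the smaller digraphs, which may now carry in-neighborhoods while remaining isolated in the competition graph --- introducing a fresh isolated vertex only for each triangle that cannot be assigned an original prey consistently with acyclicity. Bookkeeping of the product structure should give that the number of fresh isolated vertices is $3^{n-1}-12$, so that $k(H(n,3))=3\,k(H(n-1,3))+3^{n-1}-12$, which telescopes against the base value $6$ to yield $(n-3)3^{n-1}+6$. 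One must check that no spurious edges are created and that exactly six original vertices end up without a prey, matching the lower bound; I expect the delicate points to be the choice of ordering that lets almost every original vertex double as a prey and the verification that the two sides meet at $6$ rather than at $2$.
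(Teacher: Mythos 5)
Your upper-bound plan is essentially the paper's own construction: split $H(n,3)$ into three copies of $H(n-1,3)$ along the last coordinate, chain optimal digraphs for the copies so that source vertices of one copy absorb the in-neighborhoods that would otherwise require isolated vertices in the next, and add fresh isolated vertices for the $3^{n-1}$ last-coordinate triangles. Your recursion $k(H(n,3))\le 3\,k(H(n-1,3))+3^{n-1}-12$ is exactly what Theorem~\ref{thm:UH_nq} yields at $q=3$, and it telescopes correctly from $k(H(3,3))=6$. Your lower-bound \emph{framework} also matches the paper's: normalize the digraph so that every in-neighborhood is a distinct maximal clique (triangle) or empty, and reduce the claim to showing that at least $6$ vertices fail to be preys, i.e.\ your $r\ge 6$.

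However, the proof of $r\ge 6$ --- which you yourself flag as ``the main obstacle'' --- is precisely the content of the theorem, and your proposal offers no argument for it beyond the hope that the prey-matching ``stalls just often enough near the start.'' Cheap observations of the kind you gesture at (the first three vertices cannot be preys; four vertices of $H(n,3)$ contain at most one triangle; six vertices contain at most two) only get you to $r\ge 5$, and a naive prefix-counting scheme cannot cross $5$: the first nine vertices may form a copy of $H(2,3)$, which already carries $6$ triangles, so triangle scarcity alone never forces a sixth non-prey. The paper instead argues by contradiction with genuine extremal work: assuming only $5$ sources exist, the acyclic ordering forces the induced subgraphs $G_{10},G_{11},G_{12}$ on the first $10$, $11$, $12$ vertices to contain at least $6$, $7$, $8$ distinct triangles respectively; Lemma~\ref{lem2} shows a $10$-vertex subgraph of $H(n,3)$ has at most $6$ triangles \emph{and} that equality forces $G_{10}\cong H_1\cup I_1$ or $H_2$, whence $G_{10}\cong H_2$ and $G_{11}\cong H_3$, and finally no $12$-vertex subgraph containing $H_3$ can hold $8$ triangles. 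This classification of triangle-dense small subgraphs of $H(n,3)$ (or some substitute for it) is entirely absent from your proposal, so the lower bound, and hence the theorem, remains unproven.
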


\section{Preliminaries}

We use the following notation and terminology in this paper.
For a digraph $D$,
a sequence $v_1, v_2, \ldots, v_n$ of the vertices of $D$
is called an \emph{acyclic ordering} of $D$
if $(v_i,v_j) \in A(D)$ implies $i<j$.
It is well-known that a digraph $D$ is acyclic
if and only if there exists an acyclic ordering of $D$.
For a digraph $D$ and a vertex $v$ of $D$,
we define
the {\it in-neighborhood} $N^{-}_D(v)$
of $v$ in $D$ to be the set $\{w \in V(D) \mid (w,v)\in A(D)\}$,
and a vertex in $N^{-}_D(v)$ is called an {\it in-neighbor} of $v$ in $D$.
For a graph $G$ and a nonnegative integer $k$,
we denote by $G\cup I_k$ the graph such that
$V(G\cup I_k)=V(G)\cup I_k$ and
$E(G\cup I_k)=E(G)$, where $I_k$ is a set of $k$ isolated vertices.

For a clique $S$ of a graph $G$
and an edge $e$ of $G$,
we say {\it $e$ is covered by $S$}
if both of the endpoints of $e$ are contained in $S$.
An \textit{edge clique cover} of a graph $G$
is a family of cliques of $G$ such that
each edge of $G$ is covered by some clique in the family.
The {\it edge clique cover number} $\theta_E(G)$ of a graph $G$
is the minimum size of an edge clique cover of $G$.
An edge clique cover of $G$
is called a {\it minimum edge clique cover} of $G$
if its size is equal to $\theta_E(G)$.

Let $\pi_j:[q]^n \to [q]^{n-1}$ be a map defined by
$
(x_1, ..., x_{j-1}, x_j, x_{j+1}, ..., x_n) \mapsto
(x_1, ..., x_{j-1}, x_{j+1}, ..., x_n).
$
For $j \in [n]$ and ${ p} \in [q]^{n-1}$,
we let
\begin{equation}\label{eq:Sjp}
S_j(p) := \pi^{-1}_j(p) = \{x \in [q]^n \mid \pi_j(x)= { p} \}.
\end{equation}
Note that $S_j(p)$ is a clique of $H(n,q)$ with size $q$. Let
\begin{equation}\label{eq:Fnq}
\cF(n,q):=\{S_j(p) \mid j \in [n], p \in [q]^{n-1} \}.
\end{equation}
Then $\cF(n,q)$ is the family of maximal cliques of $H(n,q)$.
Park and Sano \cite{PS1} showed the following:

\begin{Lem}[\cite{PS1}]\label{lem:LB2}
The following hold:
\begin{itemize}
\item{}
Let $n\ge 2$ and $q \ge 2$,
and let $K$ be a clique of $H(n,q)$ with size at least $2$.
Then there exists a unique maximal clique $S$ of $H(n,q)$
containing $K$.
\item{}
The edge clique cover number of $H(n,q)$ is equal to $nq^{n-1}$.
\item{}
Any minimum edge clique cover of $H(n,q)$ consists of
edge disjoint maximum cliques.
\item{}
The family $\cF(n,q)$ defined by (\ref{eq:Fnq})
is a minimum edge clique cover of $H(n,q)$.
\end{itemize}
\end{Lem}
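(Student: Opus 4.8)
The plan is to treat the first (geometric) item as the technical core and to derive the other three items from it by a short counting argument. For the first item, I would start from the observation that a clique $K$ of size at least $2$ contains an edge $\{x,y\}$ with $d_H(x,y)=1$, say differing in coordinate $j$, so that $x$ and $y$ agree off coordinate $j$, i.e.\ $\pi_j(x)=\pi_j(y)=:p$. I would then show that \emph{every} vertex $z$ adjacent to both $x$ and $y$ lies in $S_j(p)$: since $z$ differs from $x$ in exactly one coordinate and from $y$ in exactly one coordinate, a direct case analysis on whether either of those two coordinates equals $j$ forces $z_i = x_i = y_i$ for all $i \ne j$, which is precisely $z \in S_j(p)$. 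Applying this to any two vertices of $K$ yields $K \subseteq S_j(p)$; applying it to two vertices of $S_j(p)$ itself shows $S_j(p)$ is maximal; and uniqueness follows because any maximal clique containing $K$ contains the edge $\{x,y\}$ and is therefore contained in $S_j(p)$.

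Two consequences are then immediate. First, since each $S_j(p)$ has exactly $q$ vertices and every clique of size at least $2$ sits inside some $S_j(p)$, the maximum clique size of $H(n,q)$ is $q$ and the maximum cliques are precisely the members of $\cF(n,q)$. Second, an edge $\{x,y\}$ in ``direction $j$'' lies only in $S_j(\pi_j(x))$, so the members of $\cF(n,q)$ are pairwise edge-disjoint and each edge lies in exactly one of them. This already establishes the fourth item and the upper bound $\theta_E(H(n,q)) \le |\cF(n,q)| = nq^{n-1}$, since $\cF(n,q)$ covers every edge and has the stated size.

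For the lower bound and the third item, I would argue by assigning cover cliques to maximal cliques. Given any edge clique cover $\cC$, the first item sends each member of $\cC$ (of size at least $2$) to the unique maximal clique containing it. If a member covers an edge of a maximal clique $S$, then it contains that edge, hence its unique maximal clique must be $S$, so it is contained in $S$; thus only sub-cliques of $S$ can cover edges of $S$. Since each $S$ has at least one edge, each of the $nq^{n-1}$ maximal cliques receives at least one member of $\cC$, giving $|\cC| \ge nq^{n-1}$ and, with the upper bound, the second item. If moreover $|\cC| = nq^{n-1}$, the assignment is a bijection, so a single member $C_S \subseteq S$ must cover all $\binom{q}{2}$ edges of $S$ by itself; were $C_S$ to omit a vertex $v \in S$, the edges from $v$ to the other vertices of $S$ would be left uncovered, so $C_S = S$. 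Hence every clique of a minimum cover is a maximum clique and these are edge-disjoint, which is the third item. I expect the coordinate case analysis in the first item to be the only genuine obstacle; once it is secured, the remaining steps are routine counting.
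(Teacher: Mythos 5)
Your proposal is correct and complete: the common-neighbor case analysis does force every common neighbor of an edge in direction $j$ into $S_j(p)$, and the counting/bijection argument for the last three items follows soundly from it. Note that the paper itself offers no proof of this lemma to compare against --- it is quoted verbatim from the reference \cite{PS1} --- so your argument stands as a self-contained reconstruction; the route you take (unique maximal clique through each edge via coordinate analysis, then edge-disjointness and a one-to-one assignment of cover cliques to the $nq^{n-1}$ maximal cliques) is the natural one and matches how this fact is established in the cited source.
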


Now we present the following lemma:

\begin{Lem} \label{lem:ExistD_nq}
Let $G$ be a graph. Suppose that any edge of $G$
is contained in exactly one maximal clique of $G$.
Let $\cF$ be the family of all maximal cliques of size
at least two in $G$,
and let $k$ be an integer with $k\ge k(G)$.
Then there exists an acyclic digraph $D$
satisfying the following:
\begin{itemize}
\item[{\rm (a)}]
The competition graph of $D$ is $G \cup I_{k}$.
\item[{\rm (b)}]
For any vertex $v$ in $D$,
$N^-_D(v) \in \cF \cup \{\emptyset\}$.
\item[{\rm (c)}]
The number of vertices which have no in-neighbor in $D$ is
equal to
$k + |V(G)| - |\cF|$.
\end{itemize}
\end{Lem}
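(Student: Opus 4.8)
The plan is to exploit the hypothesis $k\ge k(G)$ to obtain an auxiliary acyclic digraph, retain its vertex set and an acyclic ordering, and then rebuild all of its arcs so that every nonempty in-neighborhood becomes one of the maximal cliques in $\cF$. First I would record two consequences of the assumption that every edge lies in exactly one maximal clique: distinct maximal cliques are edge-disjoint, and every clique of size at least two is contained in a \emph{unique} maximal clique (take an edge inside it and apply the hypothesis). In particular $\cF$ is an edge clique cover of $G$ whose members pairwise share no edge, and each member is a clique of $G$.

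Since $k\ge k(G)$, there is an acyclic digraph $D^*$ with $C(D^*)=G\cup I_k$; fix an acyclic ordering $v_1,\dots,v_N$ of $V(D^*)$, where $N=|V(G)|+k$. For each $F\in\cF$ let $t(F)$ be the largest index $i$ with $v_i\in F$. The digraph $D$ I want to build has the same vertex set and the same ordering, and is defined by choosing, for each $F\in\cF$, a distinct \emph{prey} vertex $v_{\phi(F)}$ with $\phi(F)>t(F)$, setting $N^-_D(v_{\phi(F)})=F$, and declaring all remaining vertices to be sources. Granting such an injection $\phi$, every arc of $D$ runs from a smaller to a larger index, so $D$ is acyclic; the only nonempty in-neighborhoods are the members of $\cF$, giving (b); two vertices compete in $D$ exactly when they lie in a common $F\in\cF$, which (since $\cF$ covers each edge of $G$ exactly once and each $F$ is a clique of $G$) happens precisely for the edges of $G$, yielding $C(D)=G\cup I_k$ and (a); and since $\phi$ is injective with $|\cF|$ images, each a genuine non-source, the sources number $N-|\cF|=k+|V(G)|-|\cF|$, giving (c).

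The heart of the argument, and the step I expect to be the main obstacle, is producing the injection $\phi$. The admissible prey sets $\{v_{t(F)+1},\dots,v_N\}$ are nested suffixes, so by Hall's theorem a system of distinct representatives exists if and only if $|\{F\in\cF : t(F)\ge p\}|\le N-p$ for every $p$. To verify this I would use $D^*$: for each $F$ with $t(F)\ge p$, pick an edge $e_F$ of $F$ incident with its last vertex $v_{t(F)}$ and let $v_{i(F)}$ be a prey of $D^*$ covering $e_F$, so that $i(F)>t(F)\ge p$. The map $F\mapsto i(F)$ lands in $\{v_{p+1},\dots,v_N\}$, and it is injective: if $i(F)=i(F')$, then the single clique $N^-_{D^*}(v_{i(F)})$ contains both an edge of $F$ and an edge of $F'$, but each clique sits inside a unique maximal clique and distinct maximal cliques are edge-disjoint, forcing $F=F'$. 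Hence $|\{F:t(F)\ge p\}|\le N-p$ for all $p$, Hall's condition holds, the injection $\phi$ exists, and the construction above completes the proof.
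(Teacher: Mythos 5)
Your proof is correct, but it takes a genuinely different route from the paper's. The paper proceeds by an extremal, iterative argument: among acyclic digraphs realizing $G\cup I_k$ it takes one maximizing the number of sources, shows that any in-neighborhood violating (b) can be rerouted onto the common prey of the two last vertices of the enclosing maximal clique (contradicting maximality), and then needs a separate cleanup step---deleting duplicate in-neighborhoods and observing that each member of $\cF$ must occur as an in-neighborhood to cover its edges---to get the count in (c). You instead rebuild the digraph in one shot: keeping only the vertex set and acyclic ordering of the auxiliary digraph $D^*$, you assign to each $F\in\cF$ a distinct prey vertex placed after the last vertex of $F$, which yields (a), (b), and (c) simultaneously and by construction. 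The existence of that assignment is exactly where the paper's ``common prey'' idea reappears in your argument, but packaged as a verification of Hall's condition for nested suffix sets: the prey in $D^*$ of an edge of $F$ incident with $F$'s last vertex lies strictly later in the ordering, and edge-disjointness of maximal cliques (equivalently, uniqueness of the maximal clique containing any clique of size at least two) makes the map $F\mapsto i(F)$ injective. What your approach buys is a cleaner, non-iterative construction in which property (c) is automatic rather than requiring a second phase; what it costs is the invocation of Hall's theorem, which the paper avoids entirely---though since your admissible sets are totally ordered by inclusion, you could replace Hall by a greedy assignment (process the cliques in decreasing order of $t(F)$) and make the proof as elementary as the paper's.
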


\begin{proof}
Let $k$ be an integer such that $k \ge k(G)$.
By the definition of the competition number of a graph,
there exists an acyclic digraph satisfying (a).
Suppose that any acyclic digraphs satisfying (a) does not satisfies (b).
Let $D$ be an acyclic digraph which maximizes
$|\{ v \in V(D) \mid N^-_D(v)=\emptyset\}|$
among all acyclic digraphs satisfying (a) but not (b).
Since $D$ does not satisfy (b), there exists a vertex $v^*$ in $D$
such that $N^-_D(v^*) \not\in \cF \cup \{ \emptyset \}$.
Since
$D$ maximizes $|\{ v \in V(D) \mid N^-_D(v)=\emptyset\}|$,
we may assume that
$|N^-_D(v)| \neq 1$ for any $v \in V(D)$.
By the assumption
that any edge of $G$
is contained in exactly one maximal clique of $G$,
any clique of size at least two is contained in
a unique maximal clique in $G$.
Since $N^-_D(v^*)$ is a clique of size at least two,
there exists a unique maximal clique $S \in \cF$ containing $N^-_D(v^*)$.

Let $\sigma:=(v_1,v_2,\ldots,v_{|V(D)|})$ be an acyclic ordering of $D$,
i.e., if $(v_i,v_j) \in A(D)$ then $i<j$.
Let $v_i$ and $v_j$ be two vertices of $S$
whose indices are largest in the acyclic ordering $\sigma$.
Since $v_i$ and $v_j$ are adjacent in $G$,
there is a vertex $v_l \in V(D)$ such that
$(v_i,v_l)$ and $(v_j,v_l)$ are arcs of $D$.
Then we define a digraph $D_1$ by $V(D_1) = V(D)$ and
\begin{eqnarray*}
A(D_1) &=& (A(D) \setminus \{ (x, v^*) \mid x \in N^-_D(v^*) \} )
\cup  \{ ( x, v_l) \mid x \in S \}.
\end{eqnarray*}
Then the digraph $D_1$ is acyclic,
since the index $l$ of $v_l$ is larger
than the index of any vertex in $S$.
By the assumption
that any edge of $G$
is contained in exactly one maximal clique of $G$,
we have $N^-_D(v_l) \subseteq S$, which implies that $C(D_1) = G \cup I_k$.
Since
$|\{ v \mid N^-_{D_1}(v) = \emptyset \}|
> |\{ v \mid N^-_{D}(v) = \emptyset \}|$,
we reach a contradiction to the choice of $D$.
Thus, there exists an acyclic digraph satisfying both (a) and (b).

Let $D_2$ be an acyclic digraph satisfying both (a) and (b).
If the digraph $D_2$ has two vertices $u,w$
such that $N^-_{D_2}(u)=N^-_{D_2}(w) \neq \emptyset$,
then we can obtain an acyclic digraph $D_3$
such that all the nonempty in-neighborhoods of vertices in $D_3$
are distinct by deleting arcs in
$\{(x,u) \mid x \in N^-_{D_2}(u) \}$ from $D_2$.
Therefore,
we may assume that all the
nonempty in-neighborhoods $N^-_{D_3}(v)$ are distinct.
Then the number of vertices $v$ such that
$N^-_{D_3}(v) \neq \emptyset$ is exactly equal to $|\cF|$.
Therefore,
$
|\{ v \in V(D_3) \mid N^-_{D_3}(v) = \emptyset\}|
=|V(D_3)|- |\cF|= k+|V(G)|-|\cF|.
$
Thus the digraph $D_3$ satisfies (a), (b), and (c).
Hence the lemma holds.
\end{proof}

By Lemmas \ref{lem:LB2} and \ref{lem:ExistD_nq},
the following holds.

\begin{Cor}\label{cor:Dabc}
Let $k$ be an integer with $k\ge k(H(n,q))$.
Then there exists an acyclic digraph $D$
satisfying the following:
\begin{itemize}
\item[{\rm (a)}]
$C(D)=H(n,q) \cup I_{k}$.
\item[{\rm (b)}]
$N^-_D(v) \in \cF(n,q) \cup \{\emptyset\}$
for any $v \in V(D)$.
\item[{\rm (c)}]
$|\{ v \in V(D) \mid N^-_D(v) = \emptyset\}| = k - (n-q)q^{n-1}$.
\end{itemize}
\end{Cor}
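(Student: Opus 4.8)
The plan is to apply Lemma~\ref{lem:ExistD_nq} directly to the graph $G=H(n,q)$, so the first task is to check that $H(n,q)$ meets the hypothesis of that lemma. For $n\ge 2$ and $q\ge 2$, an edge of $H(n,q)$ is a clique of size two, so by the first bullet of Lemma~\ref{lem:LB2} it is contained in a unique maximal clique. Hence every edge of $H(n,q)$ lies in exactly one maximal clique, which is precisely the hypothesis required by Lemma~\ref{lem:ExistD_nq}.

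Next I would identify the family $\cF$ appearing in Lemma~\ref{lem:ExistD_nq}, namely the collection of all maximal cliques of $H(n,q)$ of size at least two. By Lemma~\ref{lem:LB2}, the family $\cF(n,q)$ defined in (\ref{eq:Fnq}) is exactly the family of maximal cliques of $H(n,q)$, and since each $S_j(p)$ has size $q\ge 2$, every maximal clique has size at least two. Thus $\cF=\cF(n,q)$, and applying Lemma~\ref{lem:ExistD_nq} with $k\ge k(H(n,q))$ produces an acyclic digraph $D$ for which $C(D)=H(n,q)\cup I_k$ and $N^-_D(v)\in\cF(n,q)\cup\{\emptyset\}$ for every $v\in V(D)$. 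This yields statements (a) and (b) of the corollary verbatim.

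It then remains to convert conclusion (c) of Lemma~\ref{lem:ExistD_nq} into the stated form. That conclusion asserts that the number of vertices with empty in-neighborhood equals $k+|V(H(n,q))|-|\cF(n,q)|$. Here $|V(H(n,q))|=q^n$, and by the second and fourth bullets of Lemma~\ref{lem:LB2} we have $|\cF(n,q)|=\theta_E(H(n,q))=nq^{n-1}$. Substituting and simplifying gives $k+q^n-nq^{n-1}=k-(n-q)q^{n-1}$, using the elementary identity $q^n-nq^{n-1}=q^{n-1}(q-n)=-(n-q)q^{n-1}$, which is exactly (c). Since every step is a direct invocation of the two cited lemmas together with this arithmetic, I do not expect a genuine obstacle; the only point deserving care is confirming that $\cF(n,q)$ is the complete list of maximal cliques of size at least two, so that the family $\cF$ of Lemma~\ref{lem:ExistD_nq} coincides with $\cF(n,q)$ and the counts match.
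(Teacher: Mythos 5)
Your proposal is correct and follows exactly the paper's route: the paper derives Corollary~\ref{cor:Dabc} precisely by applying Lemma~\ref{lem:ExistD_nq} to $H(n,q)$, using Lemma~\ref{lem:LB2} to verify the unique-maximal-clique hypothesis and to identify $\cF=\cF(n,q)$ with $|\cF(n,q)|=nq^{n-1}$, so that $k+q^n-nq^{n-1}=k-(n-q)q^{n-1}$. Your write-up simply makes explicit the details the paper leaves to the reader.
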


\section{Proof of Theorem \ref{Mainthm:H(n,3)}}

In this section, we present the proof of the main result.
The following theorem gives an upper bound for
the competition numbers of Hamming graphs $H(n,q)$ where $2 \leq q \leq n$.

\begin{Thm} \label{thm:UH_nq}
For $2 \leq q \leq n$, we have
\[
k(H(n,q)) \leq (n-q)q^{n-1} + k(H(q,q)).
\]
\end{Thm}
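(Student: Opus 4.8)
The plan is to construct explicitly an acyclic digraph $D$ with $C(D)=H(n,q)\cup I_{k}$ for $k=(n-q)q^{n-1}+k(H(q,q))$, building it out of a single good realization of $H(q,q)$. First I would split the vertex set $[q]^n$ by fixing the last $n-q$ coordinates: for each $a\in[q]^{n-q}$ the set $\{(y,a)\mid y\in[q]^q\}$ induces a copy of $H(q,q)$, and there are $M:=q^{n-q}$ such copies. Correspondingly I would partition the maximal-clique cover $\cF(n,q)$ into the $q^n$ \emph{internal} cliques $S_j(p)$ with $j\le q$ (each lying inside one copy) and the $(n-q)q^{n-1}$ \emph{external} cliques $S_j(p)$ with $j>q$ (each meeting $q$ distinct copies in one vertex apiece). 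The numerology is encouraging: the external cliques number exactly the first term $(n-q)q^{n-1}$ of the claimed bound.

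Next I would reduce the problem to assigning, to each clique in $\cF(n,q)$, a distinct \emph{prey} vertex that follows all of its members in some fixed acyclic ordering; declaring the in-neighborhood of each prey to be its clique then recovers $C(D)=H(n,q)\cup I_{k}$ exactly, so it remains only to realize such an assignment with at most $k$ new isolated vertices. For the internal part I would invoke Corollary~\ref{cor:Dabc} with $n=q$ to obtain an acyclic digraph $D_0$ realizing $H(q,q)\cup I_{k(H(q,q))}$ whose in-neighborhoods are precisely the cliques of $\cF(q,q)$ and which has exactly $k(H(q,q))$ source vertices. A short argument, using that all in-neighborhoods have size $\neq 1$ and that $k(H(q,q))$ is minimal, shows that every one of these $k(H(q,q))$ isolated vertices is genuinely used as a prey; hence exactly $k(H(q,q))$ cliques of $\cF(q,q)$ prey on isolated vertices, and exactly $k(H(q,q))$ graph-vertices per copy are left unused as prey.

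Then I would order the copies linearly as $a^{(1)}\prec\cdots\prec a^{(M)}$, order the vertices inside each copy by the ordering of $D_0$, and place all extra isolated vertices last. Inside copy $a^{(t)}$ I mirror $D_0$: the $q^q-k(H(q,q))$ cliques that prey on graph-vertices in $D_0$ prey on the corresponding vertices of $a^{(t)}$, leaving $k(H(q,q))$ free vertices and $k(H(q,q))$ ``overflow'' cliques per copy. The key reuse step is to route the overflow cliques of copy $a^{(t)}$ onto the free vertices of the next copy $a^{(t+1)}$ for $t<M$, which is legal since $a^{(t+1)}$ lies entirely after $a^{(t)}$; thus only the overflow cliques of the final copy $a^{(M)}$ require $k(H(q,q))$ truly new isolated vertices. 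Finally I would give each of the $(n-q)q^{n-1}$ external cliques its own fresh isolated prey placed at the end. This consumes $(n-q)q^{n-1}+k(H(q,q))$ isolated vertices in total and yields an acyclic $D$ with the desired competition graph.

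I expect the main obstacle to be precisely this global bookkeeping: verifying that the per-copy counts of free vertices and overflow cliques genuinely match, that the chaining produces distinct and correctly-ordered prey with no collision against the internal or external assignments, and that no spurious edge is created in $C(D)$ (each prey still has a single clique as its in-neighborhood). The arithmetic identity $nq^{n-1}-(n-q)q^{n-1}=q^{n}$ is what makes everything balance: the number of non-source vertices equals $|\cF(n,q)|$, forcing exactly $k(H(q,q))$ sources and hence no more isolated vertices than the stated bound.
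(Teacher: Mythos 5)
Your proof is correct, but it is organized differently from the paper's. The paper proceeds by induction on $n$: it splits $H(n,q)$ into $q$ copies of $H(n-1,q)$ by fixing only the last coordinate, applies Corollary~\ref{cor:Dabc} to each copy with the inductive bound $(n-1-q)q^{n-2}+k(H(q,q))$ --- whose condition (c) again yields exactly $k(H(q,q))$ source vertices per copy --- chains the $q$ copies by rerouting the arcs that entered $k(H(q,q))$ isolated prey of copy $i$ onto the sources of copy $i-1$, and pays $q^{n-1}$ fresh isolated prey for the cliques $S_n(x)$; the term $(n-q)q^{n-1}$ then accumulates over the recursion. You flatten this: you fix all of the last $n-q$ coordinates at once to get $q^{n-q}$ copies of $H(q,q)$, invoke the corollary only in the base case $k=k(H(q,q))$, chain all copies in a single linear pass, and pay for all $(n-q)q^{n-1}$ external cliques simultaneously. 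The price of flattening is the extra structural claim you sketch: that in a realization of $H(q,q)\cup I_{k(H(q,q))}$ satisfying (a)--(c), the cliques of $\cF(q,q)$ biject with the prey and every isolated vertex is genuinely used as prey. That claim does hold, but the relevant reason is condition (b), not the size-$\neq 1$ remark: since every in-neighborhood lies in $\cF(q,q)\cup\{\emptyset\}$, an isolated vertex can have no out-arcs, so an isolated vertex that were a source could be deleted outright, contradicting the minimality of $k(H(q,q))$; the bijection then follows by counting prey against $|\cF(q,q)|=q^q$ and the fact that each clique must serve as some in-neighborhood to cover its own edges. What your version buys is a self-contained, explicit global description of the final digraph, plus a chaining direction (copy $t$ feeding the free vertices of copy $t+1$) under which acyclicity is immediate; what the paper's induction buys is that Corollary~\ref{cor:Dabc} silently performs all of this bookkeeping at every level, so no separate bijection argument is ever needed.
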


\begin{proof}
We prove the theorem by induction on $n$.
If $n=2$, then $n=q=2$ and the theorem trivially holds.
For simplicity, for any $2 \leq q \leq n$,
let
$
\alpha(n,q) := (n-q)q^{n-1} + k(H(q,q)).
$
Let $n\ge 3$ and we assume that
$k(H(n-1,q)) \leq \alpha(n-1,q)$ holds for any $q$
such that $2 \leq q \leq n-1$.
Now consider a Hamming graph $H(n,q)$ where $q$ is an integer
satisfying $2 \leq q \leq n$.
If $n=q$, then the theorem clearly holds.
Suppose that $2\leq q \leq n-1$.
For $i \in [q]$,
let $H^{(i)}$ be the subgraph of $H(n,q)$ induced
by a vertex set
$\{ (x_1, x_2, \ldots, x_n) \in [q]^n \mid x_n=i\}$.
Then each $H^{(i)}$ is isomorphic to the Hamming graph $H(n-1,q)$.

We denote the minimum edge clique cover of $H^{(i)}$ by $\cF^{(i)}(n-1,q)$.
By the induction hypothesis,
it holds that $k(H^{(i)})= k(H(n-1,q)) \leq \alpha(n-1,q)$.
By Corollary \ref{cor:Dabc},
there exists an acyclic digraph $D^{(i)}$
for each $i\in [q]$ such that
\begin{itemize}
\item[(a)]  $C(D^{(i)})=H^{(i)} \cup I^{(i)}_{\alpha(n-1,q)}$,
\item[(b)]  $N^-_{D^{(i)}}(v) \in \cF^{(i)}(n-1,q) \cup \{ \emptyset \}$
for any $v \in V(D^{(i)})$,
\item[(c)] $|\{v \in V(D^{(i)}) \mid N^-_{D^{(i)}}(v) = \emptyset \}|
=\alpha(n-1,q) - (n-1-q)q^{n-2} = k(H(q,q))$.
\end{itemize}
where $I^{(i)}_{\alpha(n-1,q)}$ is a set of ${\alpha(n-1,q)}$ isolated vertices.
Then by (c), we may let, for each  $i\in [q]$,
\[
W^{(i)} :=\{v \in V(D^{(i)}) \mid N^-_{D^{(i)}}(v) = \emptyset \}
= \{w^{(i)}_1, \ldots, w^{(i)}_{k(H(q,q))} \}.
\]
Since $q \leq n-1$, it holds that
$k(H(q,q))=\alpha(n-1,q)-((n-1)-q)q^{n-2} \leq  \alpha(n-1,q)$.
By (a), there are at least $k(H(q,q))$ isolated vertices
in $I^{(i)}_{\alpha(n-1,q)}$ of $C(D^{(i)})$.
Let $J^{(i)}:= \{j^{(i)}_1, \ldots, j^{(i)}_{k(H(q,q))} \}$
be a set of $k(H(q,q))$ vertices which belong to $I^{(i)}_{\alpha(n-1,q)}$
in $C(D^{(i)})$. Let $D$ be the digraph defined by
\begin{eqnarray*}
V(D) &:=&
V(D^{(1)}) \cup
\bigcup_{i=2}^{q} (V(D^{(i)}) \setminus J^{(i)}), \\
A(D) &:=& A(D^{(1)}) \cup \bigcup_{i=2}^{q}
(A(D^{(i)}) \setminus
\{ (x,j^{(i)}_l) \mid 1 \leq l \leq k(H(q,q)),
x \in N_{D^{(i)}}^{-}(j^{(i)}_l) \} ) \\
&& \qquad \quad \,\,\cup
\bigcup_{i=2}^{q}
\{ (x,w^{(i-1)}_l) \mid 1 \leq l \leq k(H(q,q)),
x \in N_{D^{(i)}}^{-}(j^{(i)}_l) \}.
\end{eqnarray*}
Since each digraph $D^{(i)}$ is acyclic, the digraph $D$ is also acyclic.
In addition, $C(D)= (\bigcup_{i=1}^{q} H^{(i)}) \cup I_{k}$ where
$k := q \cdot \alpha(n-1,q) - (q-1) \cdot k(H(q,q))$.

Note that by the definition of $H^{(i)}$,
the graph $\bigcup_{i=1}^{q} H^{(i)}$ is a spanning subgraph of $H(n,q)$
except the edges of cliques in $\{S_n(x)\mid x \in [q]^{n-1} \}$.
Therefore, by letting$D^*$ be the digraph defined by
\begin{eqnarray*}
V(D^*) &:=& V(D) \cup I_{q^{n-1}}
\ = \ V(D) \cup \{z_{x} \mid x \in [q]^{n-1} \}, \\
A(D^*) &:=& A(D) \cup  \bigcup_{x\in  [q]^{n-1}} \{(y,z_x) \mid y\in S_n(x)\},
\end{eqnarray*}
we obtain an acyclic digraph  $D^*$ such that $C(D^*) = H(n,q) \cup I_{k^*}$,
where
\begin{eqnarray*}
k^* &:=& k + q^{n-1}\\
&= & q \cdot \alpha(n-1,q) - (q-1) \cdot k(H(q,q)) + q^{n-1}  \\
&=& q \cdot ((n-1-q)q^{n-2} + k(H(q,q)))
- (q-1) \cdot k(H(q,q)) +q^{n-1} \\
&=& (n-q)q^{n-1} + k(H(q,q))
\ = \ \alpha(n,q).
\end{eqnarray*}
Therefore, $k(H(n,q)) \leq \alpha(n,q)$.
Hence, the theorem holds.
\end{proof}

The following corollary follows from Theorem \ref{thm:UH_nq}.

\begin{Cor} \label{cor:UH_n3}
For $n\ge 3$, we have
$k(H(n,3)) \leq (n-3)3^{n-1}+6$.
\end{Cor}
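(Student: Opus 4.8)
The plan is to obtain this bound as a direct specialization of Theorem~\ref{thm:UH_nq}, which was just established for all $q$ with $2 \leq q \leq n$. Since the hypothesis of the corollary is $n \geq 3$, I would set $q = 3$ and first check that the hypothesis $2 \leq q \leq n$ of Theorem~\ref{thm:UH_nq} is met: indeed $2 \leq 3 \leq n$ holds precisely when $n \geq 3$, so the theorem applies to every $n$ in the stated range.

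Applying Theorem~\ref{thm:UH_nq} with $q = 3$ then yields
\[
k(H(n,3)) \leq (n-3)3^{n-1} + k(H(3,3)).
\]
The only remaining ingredient is the value of the base case $k(H(3,3))$. Here I would invoke Theorem~\ref{thm:H(3,q)}, which gives $k(H(3,q)) = 6$ for all $q \geq 3$; taking $q = 3$ gives $k(H(3,3)) = 6$. Substituting this into the displayed inequality produces exactly $k(H(n,3)) \leq (n-3)3^{n-1} + 6$, as claimed.

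Since the statement is a straightforward consequence of two previously proved results, I do not anticipate any genuine obstacle; the only points requiring a moment of care are the bookkeeping checks that both cited results are applicable at $q = 3$ (namely the range condition $3 \leq n$ for Theorem~\ref{thm:UH_nq} and the condition $q \geq 3$ for Theorem~\ref{thm:H(3,q)}), both of which hold under the hypothesis $n \geq 3$. This corollary also supplies the upper-bound half of the main result, Theorem~\ref{Mainthm:H(n,3)}; the matching lower bound would be the substantive work carried out elsewhere.
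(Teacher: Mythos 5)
Your proposal is correct and matches the paper exactly: the paper derives this corollary by specializing Theorem~\ref{thm:UH_nq} to $q=3$ and using $k(H(3,3))=6$ from Theorem~\ref{thm:H(3,q)}, which is precisely your argument (the paper simply leaves these substitutions implicit). Nothing further is needed.
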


In the following, we show a lower bound for the competition numbers
of ternary Hamming graphs $H(n,3)$ where $n\ge 3$.

\begin{Lem} \label{lem2}
Let $n \ge 3$, and let $G$ be a subgraph
of the ternary Hamming graph $H(n,3)$
with $10$ vertices.
Then the number of triangles in $G$
is at most $6$.
Moreover, it is exactly equal to $6$ if and only if $G$ is
isomorphic to either $H_1 \cup I_1$ or $H_2$ in Figure~\ref{fig:H1H2H3}.
\end{Lem}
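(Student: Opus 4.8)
The plan is to extract structural information about subgraphs of $H(n,3)$ on $10$ vertices purely from the clique structure described in Lemma~\ref{lem:LB2}, and then count triangles via their unique maximal cliques. By Lemma~\ref{lem:LB2}, every edge of $H(n,3)$ lies in a unique maximal clique, and each maximal clique has the form $S_j(p)$ of size exactly $3$; thus the maximal cliques are triangles, and every triangle of $H(n,3)$ is precisely one of these $S_j(p)$. Consequently, counting triangles in a subgraph $G$ reduces to counting how many of the maximal triangles $S_j(p)$ are entirely contained in $V(G)$. So first I would restate the problem: given a $10$-vertex set $V \subseteq [3]^n$, bound the number of "lines" $S_j(p)$ (triples differing in exactly one coordinate) that lie inside $V$.

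Next I would set up a double-counting / incidence argument on these lines. For each of the $10$ vertices $v \in V$ and each coordinate direction $j \in [n]$, $v$ lies on exactly one line $S_j(\pi_j(v))$; that line is contained in $V$ only if the other two vertices obtained by varying coordinate $j$ are also in $V$. The key local constraint is that through a single vertex $v$, lines in distinct coordinate directions are "independent" in the sense that two full lines through $v$ in directions $j \ne j'$ share only $v$. I would count pairs (vertex, line through it contained in $V$): each line contributes $3$ such incidences, so if $t$ is the number of triangles then the incidence count is $3t$. I would then bound, for each vertex $v$, the number of contained lines through $v$ by analyzing how many coordinate directions $j$ can have both neighbors of $v$ in direction $j$ present in the remaining $9$ vertices, and optimize the total $3t$ subject to $|V|=10$. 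A clean way to organize this is to observe that a contained line forces its three vertices to be mutually "aligned," and to bound the total using convexity of the extremal configuration; the arithmetic should pin the maximum at $6$.

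The delicate part is the equality analysis, and I expect that to be the main obstacle. Showing $t \le 6$ is a counting estimate, but characterizing exactly which $10$-vertex subgraphs achieve $t = 6$ requires identifying the rigid configuration of lines forcing the two named graphs $H_1 \cup I_1$ and $H_2$ from Figure~\ref{fig:H1H2H3}. Here I would argue that $6$ contained lines on $10$ vertices leave essentially no slack: the lines must pack onto the vertices in one of a small number of incidence patterns (for instance, a configuration resembling a $3 \times 3$ grid of lines in two coordinate directions, possibly plus an isolated vertex, versus a more symmetric arrangement), and I would rule out all but the two claimed patterns by tracking, for each candidate line-set, how many vertices it consumes and which vertex degrees it forces. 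Since all computations are over the finite alphabet $[3]$ and only $10$ vertices are involved, after reducing to the relevant coordinate directions the case check is finite; the work is in organizing it so that the two extremal graphs emerge as the unique possibilities up to isomorphism, and in confirming that both indeed realize exactly $6$ triangles.
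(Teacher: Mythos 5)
Your framework is the same as the paper's --- since $q=3$, Lemma~\ref{lem:LB2} gives that the triangles of $H(n,3)$ are exactly the lines $S_j(p)$, every edge lies in exactly one of them, and the incidence count $\sum_{v\in V(G)} t_G(v) = 3t_G$ (where $t_G(v)$ denotes the number of triangles containing $v$) is precisely how the paper begins. The genuine gap is the step where you propose to ``bound, for each vertex $v$, the number of contained lines through $v$'' and then ``optimize the total $3t$ subject to $|V|=10$.'' A purely local bound of this kind cannot yield $6$: a vertex of a $10$-vertex subset can lie on up to $4$ contained lines (four lines through $v$ consume only $9$ vertices when $n\ge 4$), so local optimization gives at best $3t\le 40$, i.e.\ $t\le 13$. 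The bound that is actually needed, $t_G(v)\le 2$ for every vertex, is \emph{false} for arbitrary $10$-vertex subgraphs (three full lines through one vertex already give $t_G(v)=3$); it holds only under the standing assumption that $G$ has at least $6$ triangles, and proving it requires a global argument. The paper's argument is: take $G$ triangle-maximal, so $t_G\ge 6$ since $H_2$ realizes $6$; if some vertex $v_1$ had $t_G(v_1)\ge 3$, the three lines through $v_1$ induce the $7$-vertex graph $H_4$ of Figure~\ref{fig;H4H5} (vertices on distinct lines through $v_1$ are at Hamming distance $2$, so no further edges occur); the at least three triangles outside $H_4$ must then each contain two of the three remaining vertices $v_8,v_9,v_{10}$, forcing $\{v_8,v_9,v_{10}\}$ to be a triangle and hence some edge to lie in two triangles, contradicting uniqueness. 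This interplay between the assumed lower bound $t_G\ge 6$ and the local configuration is the idea your plan is missing; without it the counting does not close. (You would also need the divisibility step: $t_G(v)\le 2$ gives $\sum_v t_G(v)\le 20$, and since the sum is $3t_G$ it is at most $18$, whence $t_G\le 6$.)

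The ``moreover'' part is likewise not yet an argument. The paper does not perform an unstructured finite case check: from $\sum_v t_G(v)=18<20$ it extracts a vertex $v_{10}$ with $t_G(v_{10})\le 1$, and then shows that $G-v_{10}$ is isomorphic to $H_1\cong H(2,3)$ by locating a triangle $T_0$ all of whose vertices have triangle-degree $2$, producing three pairwise disjoint triangles $T_1,T_2,T_3$ attached to $T_0$, finding a transversal triangle such as $\{v_4,v_6,v_8\}$, and finally coordinatizing (setting $v_1=(1,1,\dots,1)$, $v_2=(2,1,\dots,1)$, etc.) so that the $3\times 3$ grid $H_1$ is forced; the two extremal graphs $H_1\cup I_1$ and $H_2$ then correspond to whether $t_G(v_{10})$ is $0$ or $1$. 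Your proposal acknowledges this is ``the main obstacle'' but offers only an intention to enumerate incidence patterns, so as written both the inequality and the equality characterization remain unproved.
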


\begin{proof}
We denote by $t_G$ the number of triangles in a graph $G$.
For a vertex $v$ in a graph $G$,
we denote by $t_G(v)$ the number of triangles in $G$
containing the vertex $v$.
For any graph $G$, it holds that
\begin{equation}\label{nG}
\sum_{v\in V(G)} t_G(v) = 3\times t_G.
\end{equation}
Let $G$ be a subgraph of $H(n,3)$ with $10$ vertices
such that $t_G$ is the maximum
among all the subgraphs of $H(n,3)$ with $10$ vertices.
Since the graph $H_2$ drawn in Figure
\ref{fig:H1H2H3}
is a subgraph of $H(n,3)$
with $10$ vertices and $6$ triangles,
we have $t_G \ge 6$.
Let $v_1, v_2, v_3, \ldots, v_9, v_{10}$
be the vertices of $G$.

\begin{figure}[t]
\begin{center}
\psfrag{a}{$H_1$}
\psfrag{b}{$H_2$}
\psfrag{c}{$H_3$}
\includegraphics{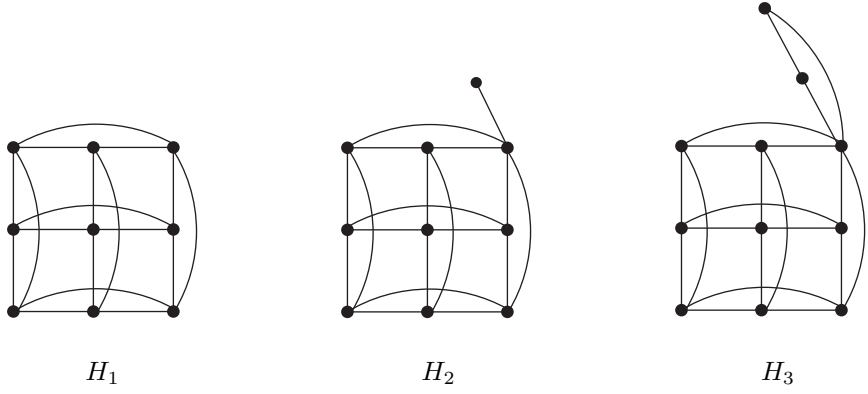}\\
\caption{Induced subgraphs of $H(n,3)$} \label{fig:H1H2H3}
\end{center}
\end{figure}

Suppose that there exists a vertex $v_i$
such that $t_G(v_i) \ge 3$. Then we will reach a contradiction.
Without loss of generality, we may assume that $t_G(v_1) \ge 3$
and that $\{v_1,v_2,v_3\}$, $\{v_1,v_4,v_5\}$, $\{v_1,v_6,v_7\}$
are cliques which contain $v_1$.
Since each edge is contained exactly one triangle by Lemma \ref{lem:LB2},
then the subgraph of $H(n,3)$ induced by $\{v_1, v_2, \ldots, v_7\}$
is isomorphic to
the graph $H_4$ drawn in Figure~\ref{fig;H4H5}.

\begin{figure}
\begin{center}
\psfrag{D}{$v_4$}
\psfrag{C}{$v_3$}
\psfrag{E}{$v_5$}
\psfrag{F}{$v_6$}
\psfrag{G}{$v_7$}
\psfrag{A}{$v_1$}
\psfrag{B}{$v_2$}
\psfrag{I}{$v_8$}
\psfrag{H}{$v_9$}
\psfrag{X}{$H_4$}
\psfrag{Y}{$H_5$}
\includegraphics{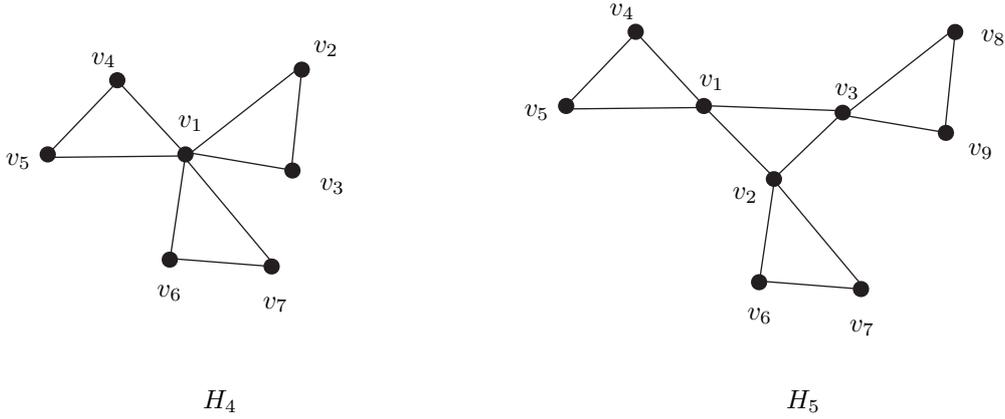} \\
\caption{Subgraphs of $H(n,3)$} \label{fig;H4H5}
\end{center}
\end{figure}

Since $G$ has at least $6$ triangles,
there exist at least three triangles $T_1,T_2,T_3$ in $G$
which are not in the induced subgraph $H_4$.
For each $1 \leq i\leq 3$,
the triangle $T_i$ has at least two vertices in $\{v_8,v_9,v_{10}\}$
since each edge of $G$ is contained in exactly one triangle.
We may assume that $T_1 \supseteq \{v_8,v_9\}$,
$T_2 \supseteq \{v_8,v_{10}\} $, and $T_3 \supseteq \{v_9,v_{10}\}$.
Then $\{v_8, v_9, v_{10}\}$ forms a triangle,
which contradicts the fact that each edge of $G$
is contained exactly one triangle.
Therefore, $t_G(v_i) \leq 2$ for all $v_i \in V(G)$.
Since $t_G(v_i) \leq 2$ for all $v_i \in V(G)$,
it holds that $\sum_{i=1}^{10} t_G(v_i) \le 20$.
By (\ref{nG}),
$\sum_{i=1}^{10} t_G(v_i)$ must be a multiple of $3$,
$\sum_{i=1}^{10} t_G(v_i) \le 18$.
Since $t_G \ge 6$, we also have
$\sum_{i=1}^{10} t_G(v_i) \ge 18$ by (\ref{nG}).
Thus, we have $\sum_{i=1}^{10} t_G(v_i) = 18$ and $t_G=6$.

Since $\sum_{i=1}^{10} t_G(v_i) = 18$ and $t_G(v_i) \leq 2$
for all $v_i \in V(G)$,
without loss of generality,
we may assume that $t_{G}(v_{10}) \le 1$.
Let $G - v_{10}$ be the graph obtained from $G$ by deleting $v_{10}$.
To show the ``moreover'' part,
it is sufficient to show that $G - v_{10}$ is isomorphic to the graph
$H_1 (\cong H(2,3))$ drawn in Figure \ref{fig:H1H2H3}.

Note that $15 \le \sum_{i=1}^{9} t_{G-v_{10}} (v_i)  \le 18$,
and $t_{G - v_{10}} (v) \le 2$ for all $v\in V(G - v_{10})$.
Then  $G - v_{10}$ has at least $6$ vertices $v$
such that $t_{G - v_{10}}(v) = 2$, and so
$G - v_{10}$ has a triangle $T_0=\{v_1,v_2,v_3\}$
such that $t_{G - v_{10}} (v_i)=2$ for any $i=1,2,3$.
For $i=1,2,3$, let $T_i$ be the triangle containing
$v_i$ and $T_i \neq T_0 $.
Then the triangles $T_1$, $T_2$, and $T_3$ are mutually vertex disjoint,
and we may assume that
$T_1=\{v_1, v_4, v_5\}$, $T_2=\{v_2, v_6, v_7\}$,
and $T_3=\{v_3, v_8, v_9\}$
(see the graph $H_5$ drawn
in Figure~\ref{fig;H4H5}).
Since $G - v_{10}$ has at least $6$ vertices $v$
such that $t_{G - v_{10}}(v) = 2$,
we may assume that $t_{G - v_{10}}(v_4) = 2$.
Then the triangle containing $v_4$ in $G - v_{10}$, which is not $T_1$,
are containing one of $\{v_6,v_7\}$ and one of $\{v_8,v_9\}$.
Without loss of generality, we may assume that $\{v_4,v_6,v_8\}$
is a triangle.

Since $\{v_1, v_2, v_6, v_4\}$ forms a cycle of length four,
without loss of generality, we may let
\[
\begin{array}{llll}
v_1 := (1,1,1, \ldots, 1), & v_2 := (2,1,1, \ldots, 1),
& v_4 := (1,2,1, \ldots, 1), & v_6 := (2,2,1, \ldots, 1).
\end{array}
\]
Since $\{v_1,v_2,v_3\}$ is a triangle,
we have $v_3=(3,1,1, \ldots,1)$.
Since $\{v_1,v_4,v_5\}$ is a triangle,
we have $v_5=(1,3,1, \ldots,1)$.
Since $\{v_2,v_6,v_7\}$ is a triangle,
we have $v_7=(2,3,1, \ldots,1)$.
Since $\{v_4,v_6,v_8\}$ is a triangle,
we have $v_8=(3,2,1, \ldots,1)$.
Since $\{v_3,v_8,v_9\}$ is a triangle,
we have $v_9=(3,3,1, \ldots,1)$.
Then $\{v_5,v_7,v_9\}$ forms a triangle,
and hence $G - v_{10}$ is isomorphic to $H_1$
in Figure \ref{fig:H1H2H3}.
We complete the proof.
\end{proof}

\begin{Thm} \label{thm:LH_n3}
For $n\ge 3$, we have
$k(H(n,3)) \geq (n-3)3^{n-1} +6$.
\end{Thm}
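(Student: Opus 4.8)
The plan is to prove the bound by controlling the number of vertices with empty in-neighborhood in a well-structured digraph. Fix $k=k(H(n,3))$. By Corollary~\ref{cor:Dabc} there is an acyclic digraph $D$ with $C(D)=H(n,3)\cup I_k$, with $N^-_D(v)\in\cF(n,3)\cup\{\emptyset\}$ for every $v$, and with exactly $k-(n-3)3^{n-1}$ vertices having empty in-neighborhood. Hence it suffices to show that any such $D$ has at least $6$ vertices $v$ with $N^-_D(v)=\emptyset$; this immediately gives $k(H(n,3))\ge (n-3)3^{n-1}+6$.

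First I would fix an acyclic ordering $v_1,\ldots,v_N$ of $D$, where $N=3^n+k$. Since $q=3$, every clique in $\cF(n,3)$ is a triangle, and by Lemma~\ref{lem:LB2} each edge of $H(n,3)$ lies in a unique such triangle; there are exactly $n3^{n-1}$ of them. As in Lemma~\ref{lem:ExistD_nq}, the nonempty in-neighborhoods of $D$ may be taken pairwise distinct, so each triangle is the in-neighborhood of exactly one vertex, which necessarily appears after all three vertices of the triangle in the ordering. Writing $t_m$ for the number of triangles of $H(n,3)$ contained in $\{v_1,\ldots,v_m\}$, the in-neighborhood of any $v_j$ with $j\le i$ and $N^-_D(v_j)\ne\emptyset$ is a triangle inside $\{v_1,\ldots,v_{i-1}\}$, and distinct vertices give distinct triangles; hence the number of vertices among $v_1,\ldots,v_i$ with empty in-neighborhood is at least $i-t_{i-1}$.

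Taking $i=11$ and invoking Lemma~\ref{lem2}, which bounds the number of triangles in any $10$-vertex subgraph of $H(n,3)$ by $6$, I obtain $t_{10}\le 6$ and therefore at least $11-t_{10}\ge 5$ vertices with empty in-neighborhood. The remaining task is to rule out the value $5$, and this is the crux of the argument. If exactly $5$ such vertices existed, the inequality at $i=11$ would be tight: we would have $t_{10}=6$, all five empty-in-neighborhood vertices would lie among $v_1,\ldots,v_{11}$, and every later vertex $v_{12},\ldots,v_N$ would have a nonempty (triangular) in-neighborhood. Here I would invoke the ``moreover'' part of Lemma~\ref{lem2}: since the first ten vertices span exactly $6$ triangles, the subgraph they induce is isomorphic to $H_1\cup I_1$ or $H_2$ of Figure~\ref{fig:H1H2H3}.

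The main obstacle is the extremal case analysis that completes the proof. In each of the two configurations I would track the triangles whose top vertex (the one of largest index) sits near the end of the first block: each such triangle must be owned by a strictly later vertex, whose own in-neighborhood must in turn be a triangle spanned by earlier vertices. Chasing these forced ownerships against the acyclic ordering, the uniqueness of the triangle through each edge (Lemma~\ref{lem:LB2}), and the fact that for $n\ge 3$ the graph $H(n,3)$ carries additional triangles through the vertices of the block---a feature absent when $n=2$, consistent with $k(H(2,3))=2$---I expect to reach a contradiction, forcing a sixth vertex with empty in-neighborhood. This yields $k(H(n,3))\ge (n-3)3^{n-1}+6$, matching the upper bound of Corollary~\ref{cor:UH_n3} and, together with Theorem~\ref{thm:H(3,q)} for the base case $n=3$, proving Theorem~\ref{Mainthm:H(n,3)}.
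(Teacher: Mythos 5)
Your setup coincides with the paper's own proof: Corollary \ref{cor:Dabc}, an acyclic ordering, distinctness of the nonempty in-neighborhoods, and the count that the number of sources among $v_1,\ldots,v_i$ is at least $i-t_{i-1}$; combined with Lemma \ref{lem2} at $i=11$ this correctly yields at least $5$ sources. But the proof stops there. Ruling out the value $5$ --- which you yourself call the crux --- is never actually carried out: you say that in each of the two extremal configurations ($H_1\cup I_1$ or $H_2$) you would ``track forced ownerships'' against the acyclic ordering and that you ``expect to reach a contradiction.'' That is a plan, not an argument; nothing in the proposal shows the chase ends in a contradiction rather than in a consistent configuration, so what you have actually established is only $k(H(n,3))\ge (n-3)3^{n-1}+5$, strictly weaker than the theorem.

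The missing content is precisely what the paper supplies after invoking Lemma \ref{lem2}, and it follows your own counting inequality pushed two steps further. If there were exactly $5$ sources, then applying your bound at $i=12$ and $i=13$ forces $t_{11}\ge 7$ and $t_{12}\ge 8$; that is, the induced subgraph $G_{11}$ on the first eleven vertices must contain $7$ triangles and $G_{12}$ must contain $8$. The paper then argues structurally: $G_{10}\cong H_1\cup I_1$ is impossible, because adding one vertex to $H_1\cup I_1$ cannot create a new triangle (every edge of $H_1$ already lies in its unique triangle inside $H_1$ by Lemma \ref{lem:LB2}, and the isolated vertex is nonadjacent to all of $H_1$), so $G_{10}\cong H_2$, and the only way to reach $7$ triangles on eleven vertices gives $G_{11}\cong H_3$; finally, no twelve-vertex subgraph of $H(n,3)$ containing $H_3$ has $8$ triangles, contradicting $t_{12}\ge 8$. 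Some such concrete case analysis of the extremal configurations (or a substitute for it) is indispensable, since the statement genuinely fails for $n=2$, where $k(H(2,3))=2<3$; a proof that never exploits the structure available only for $n\ge 3$ cannot close the gap between $5$ and $6$.
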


\begin{proof}
Suppose that $k(H(n,3)) \leq (n-3)3^{n-1}+5$.
Then, by Corollary \ref{cor:Dabc},
there exists an acyclic digraph $D$
such that $C(D)=H(n,3)\cup I_{(n-3)3^{n-1}+5}$,
$N^-_D(v) \in \cF(n,3) \cup \{\emptyset\}$ for any $v \in V(D)$,
and
$|\{ v \in V(D) \mid N^-_D(v) = \emptyset\}| =5$.
Let $v_1, v_2, \ldots, v_{(n-3)3^{n-1} + 5}$ be an acyclic ordering of $D$.
We may assume that the vertices $v_1, v_2, \ldots, v_5$
have no in-neighbors in $D$.
For $6 \le i \le 12$, let
$\cF_i:= \{ N^-_{D}(x) \mid x \in \{ v_1, v_2, \ldots, v_{i}, v_{i+1} \} \}$
and
let $G_i$ be the subgraph of $H(n,3)$ induced
by $\{ v_1,v_2,\ldots, v_{i} \}$.
Note that $\cF_i$ contains $i-4$ triangles
whose vertices are in $\{ v_1,v_2,\ldots, v_{i} \}$ and that
$G_i$ contains all the triangles in $\cF_i$.
Since $G_{10}$ is a subgraph of $H(n,3)$
with $10$ vertices containing $6$ triangles, by Lemma \ref{lem2},
$G_{10}$ is isomorphic to $H_1 \cup I_1$ or $H_2$,
where $H_1$ and $H_2$ are the graphs drawn in
Figure \ref{fig:H1H2H3}.
Since $G_{10}$ is a subgraph of $G_{11}$
and $G_{11}$ has $7$ triangles,
$G_{10}$ must be isomorphic to $H_2$
and $G_{11}$ must be isomorphic to $H_3$ in Figure \ref{fig:H1H2H3}.
Then we can observe that any subgraph of $H(n,3)$ with $12$ vertices
containing $H_3$ cannot have $8$ triangles.
However, $G_{12}$ is a subgraph of $H(n,3)$
with $12$ vertices and $8$ triangles,
which is a contradiction.
Hence $k(H(n,3)) \geq (n-3)3^{n-1}+6$.
\end{proof}

\begin{proof}[Proof of Theorem \ref{Mainthm:H(n,3)}]
Theorem \ref{Mainthm:H(n,3)} follows from
Corollary \ref{cor:UH_n3} and
Theorem \ref{thm:LH_n3}.
\end{proof}

\section{Concluding Remarks}

In this paper, we gave the exact values of
the competition numbers of ternary Hamming graphs.
Note that the bound given in Theorem \ref{thm:UH_nq} is tight
when $q=2$ or $3$, that is,
$k(H(n,q))=(n-q)q^{n-1} + k(H(q,q))$ holds for $n \geq q$ and $q \in\{2,3\}$.
We left a question for a further research
whether or not the bound in Theorem \ref{thm:UH_nq}
is tight for any $2 \le q \le n$.


\end{document}